\newtheorem{theorem}{Theorem}
\newtheorem{proposition}[theorem]{Proposition}
\newtheorem{lemma}[theorem]{Lemma}
\newtheorem{corollary}[theorem]{Corollary}
\newcommand{\calc}{{\cal C}}
\newcommand{\N}{\mathbb{N}}
\newcommand{\Z}{\mathbb{Z}}
\newcommand{\dd}{\displaystyle }
\newcommand{\nn}{\N}
\newcommand{\bld}[2]{{\buildrel{#1}\over{#2}}}
\newcommand{\st}[2]{{\mathrel{\mathop{#2}\limits_{#1}}{}\!}}
\newcommand{\stb}[3]{{\st{{#1}}{\bld{{#2}}{#3}}{}\!}}
\newcommand{\xmare}[2]{\stb{#1}{#2}{\mbox{\Huge$\times$}}}
\newcommand{\s}{{\sigma}}
\title{\bf Finite groups determined\\ by an inequality of the orders\\ of their normal subgroups}
\author{Marius T\u arn\u auceanu}
\date{October 1, 20011}
\begin{document}

\maketitle

\begin{abstract}
    In this article we introduce and study a class of finite groups
    for which the orders of normal subgroups satisfy a certain inequality.
    It is closely connected to some well-known arithmetic classes of natural numbers.
\end{abstract}

\noindent{\bf MSC (2000):} Primary 20D60, 20D30; Secondary 11A25,
11A99.

\noindent{\bf Key words:} finite groups, subgroup lattices, normal
subgroup lattices, deficient numbers, perfect numbers.

\section{Introduction}

Let $n$  be a natural number and $\s(n)$ be the sum of all
divisors of $n$. We say that $n$ is a {\it deficient number} if
$\s(n)<2n$ and a {\it perfect number} if $\s(n)=2n$ (for more
details on these numbers, see \cite{5}). Thus, the set consisting
of both the deficient numbers and the perfect numbers can be
characterized by the inequality
\[ \dd\sum_{d\in L_n}d\le 2n \,, \]
where $L_n=\{d\in\nn\mid d|n\}.$

Now, let $G$ be a finite group. Then the set $L(G)$ of all
subgroups of $G$ forms a complete lattice with respect to set
inclusion, called the {\it subgroup lattice} of $G$. A remarkable
subposet of $L(G)$ is constituted by all cyclic subgroups of $G$.
It is called the {\it poset of cyclic subgroups} of $G$ and will
be denoted by $C(G)$. If the group $G$ is cyclic of order $n$,
then $L(G)=C(G)$ and they are isomorphic to the lattice $L_n$. So,
$n$ is deficient or perfect if and only if
\begin{equation}
    \sum_{H\in L(G)}|H|\le 2|G| \,, \tag{1}
\end{equation}
or equivalently
\begin{equation}
    \sum_{H\in C(G)}|H|\le 2|G| \,. \tag{2}
\end{equation}
\bigskip

In \cite{1} we have studied the classes $\calc_{1}$ and
$\calc_{2}$ consisting of all finite groups $G$ which satisfy the
inequalities $(1)$ and $(2)$, respectively. The starting point for
our discussion is given by the open problem in the end of
\cite{1}. It suggests us to extend the initial condition $(1)$ in
another interesting way, namely
\begin{equation}
    \sum_{H\in N(G)}|H|\le 2|G| \,, \tag{3}
\end{equation}
where $N(G)$ denotes the set of normal subgroups of $G$. Recall
that $N(G)$ forms a sublattice of $L(G)$, called the {\it normal
subgroup lattice} of $G$. We also have $L(G)=N(G)$, for any finite
cyclic group $G$. Hence, in the same manner as above, one can
introduce the class $\calc_{3}$ consisting of all finite groups
$G$ which satisfy the inequality $(3)$. Clearly, it properly
contains $\calc_{1}$ (the symmetric group $S_3$ belongs to
$\calc_{3}$ but not to $\calc_{1}$) and is different from
$\calc_{2}$ (the dihedral group $D_8$ belongs to $\calc_{2}$ but
not to $\calc_{3}$). Characterizing finite groups in $\calc_{3}$
is difficult, since the structure of the normal subgroup lattice
is unknown excepting few particular cases. Their investigation is
the main goal of this paper.

The paper is organized as follows. In Section 2 we study some
basic properties of the class $\calc_{3}$, while Section 3 deals
with several classes of finite groups that belong to $\calc_{3}$.
The most significant results are obtained for nilpotent groups,
nonabelian $P$-groups, metacyclic groups and solvable T-groups.

Most of our notation is standard and will not be repeated here.
Basic definitions and results on groups can be found in \cite{4}
and \cite{7}. For subgroup lattice concepts we refer the reader to
\cite{6} and \cite{8}.

\section{Basic properties of the class $\calc_{3}$}

For a finite group $G$ let us denote
\[ \s_{3}(G)=\sum_{H\in N(G)}\,\frac{|H|}{|G|}=\sum_{H\in N(G)}\,\frac{1}{|G:H|} \,. \]
In this way, $\calc_{3}$ is the class of all finite groups $G$ for
which $\s_{3}(G)\le 2$. First of all, observe that for Dedekind
groups (that is, groups with all subgroups normal) this function
coincides with the function $\s_{1}$ defined and studied in
\cite{1}. In this way, a finite Dedekind group belongs to
$\calc_{3}$ if and only if it belongs to $\calc_{1}$. $\s_{3}$ is
also a multiplicative function: if $G$ and $G'$ are two finite
groups satisfying $\gcd(|G|, |G'|)=1$, then
\[ \s_{3}(G \times G')=\s_{3}(G)\s_{3}(G') \,. \]
By a standard induction argument, it follows that if $G_{i}$,
$i=1,2,\dots,k$, are finite groups of coprime orders, then
\[ \s_{3}(\xmare{i=1}k G_{i})=\prod_{i=1}^{k} \s_{3}(G_{i}) \,. \]

Obviously, $\calc_{3}$ contains the finite cyclic groups of prime
order. On the other hand, we easily obtain
$$\s_{3}(\Z_{p}\times\Z_{p}) = \frac{1+p+2p^2}{p^2}>2, \hspace{1mm} {\rm
for}\hspace{1mm} {\rm any}\hspace{1mm} {\rm prime}\hspace{1mm}p \,
.$$ This relation shows that $\calc_{3}$ is not closed under
direct products or extensions.

In order to decide whether the class $\calc_{3}$ is closed under
subobjects, i.e.\@ whether all subgroups of a group in $\calc_{3}$
also belong to $\calc_{3}$, the notion of $P$-group (see  \cite{6}
and \cite{8}) is very useful. Let $p$ be a prime, $n\geq2$ be a
cardinal number and $G$ be a group. We say that $G$ belongs to the
class $P(n,p)$ if it is either elementary abelian of order $p^n$,
or a semidirect product of an elementary abelian normal subgroup
$H$ of order $p^{n-1}$ by a group of prime order $q\neq p$ which
induces a nontrivial power automorphism on $H$. The group $G$ is
called a $P$-$group$ if $G\in P(n,p)$ for some prime $p$ and some
cardinal number $n\geq 2$. It is well-known that the class
$P(n,2)$ consists only of the elementary abelian group of order
$2^n$. Also, for $p>2$ the class $P(n,p)$ contains the elementary
abelian group of order $p^n$ and, for every prime divisor $q$ of
$p-1$, exactly one nonabelian $P$-group with elements of order
$q$. Moreover, the order of this group is $p^{n-1}q$ if $n$ is
finite. The most important property of the groups in a class
$P(n,p)$ is that they are all lattice-isomorphic (see Theorem
2.2.3 of \cite{6}).

Now, let $p,q$ be two primes such that $p\neq 2$, $q\mid p-1$ and
$p^2q \geq 1+p+2p^2$ (for example, $p=7$ and $q=3$), and let $G$
be the nonabelian $P$-group of order $p^2q$. Then $N(G)$ consists
of $G$ itself and of all subgroups of the elementary abelian
normal subgroup $H\cong\Z_{p}\times\Z_{p}$. This implies that
$$\s_{3}(G)=\frac{1+p+2p^2+p^2q}{p^2q}\leq 2,$$ that is $G$ belongs to
$\calc_{3}$. Since $H$ is not contained in $\calc_{3}$, we infer
that $\calc_{3}$ is not closed under subobjects.  On the other
hand, we know that $L(G)$ is isomorphic to the subgroup lattice of
the elementary abelian group of order $p^3$, which not belongs to
$\calc_{3}$. So, $\calc_{3}$ is not closed under lattice
isomorphisms, too.

Finally, let $G$ be a group in $\calc_{3}$ and $N$ be a normal
subgroup of $G$. Then we easily get
$$\s_{3}(G/N)=\dd\sum_{^{H\in N(G)}_{N\subseteq
H}}\,\frac1{|G:H|}\leq\s_{3}(G)\leq2,$$ proving that $\calc_{3}$
is closed under homomorphic images.

\section{Finite groups contained in $\calc_{3}$}

In this section we shall focus on characterizing some particular
classes of groups in $\calc_{3}$. The simplest case is constituted
by finite $p$-groups.

\begin{lemma}\label{th:C1}
    A finite $p$-group is contained in $\calc_{3}$ if and only if it is cyclic.
\end{lemma}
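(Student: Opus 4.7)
The plan is to prove both directions of the equivalence by exploiting the closure of $\calc_{3}$ under homomorphic images, which was established at the end of Section 2, together with the explicit computation $\s_{3}(\Z_{p}\times\Z_{p})=\frac{1+p+2p^2}{p^2}>2$ already recorded in the excerpt.

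For the ``if'' direction, I would take $G$ to be cyclic of order $p^n$. Since every subgroup of a cyclic group is normal, $N(G)=L(G)$ consists of the chain of subgroups of orders $1,p,p^2,\dots,p^n$. Summing the geometric series gives
\[ \s_{3}(G)=\sum_{i=0}^{n}\frac{1}{p^i}=\frac{p-p^{-n}}{p-1}<\frac{p}{p-1}\le 2, \]
the last inequality holding for every prime $p\ge 2$. Hence every cyclic $p$-group lies in $\calc_{3}$.

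For the ``only if'' direction, I would argue by contraposition: if $G$ is a non-cyclic finite $p$-group, then $G\notin\calc_{3}$. The key step is to produce a quotient of $G$ isomorphic to $\Z_{p}\times\Z_{p}$. Since $G$ is non-cyclic, its Frattini quotient $G/\Phi(G)$ is elementary abelian of rank at least $2$ by the Burnside basis theorem; equivalently, $G$ admits two distinct maximal (hence normal, of index $p$) subgroups $M_1,M_2$, and the natural embedding $G/(M_1\cap M_2)\hookrightarrow G/M_1\times G/M_2$ forces $G/(M_1\cap M_2)\cong\Z_{p}\times\Z_{p}$. Applying the homomorphic-image closure property of $\calc_{3}$, if $G$ belonged to $\calc_{3}$ then so would $\Z_{p}\times\Z_{p}$, contradicting $\s_{3}(\Z_{p}\times\Z_{p})>2$.

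The argument is essentially a bookkeeping exercise once one has the two ingredients above, so I do not expect a real obstacle; the only slightly non-formal point is the reduction to the rank-$2$ elementary abelian quotient, which is immediate from elementary $p$-group theory. The proof thus reads as two short paragraphs: one geometric-series computation and one two-line contrapositive argument.
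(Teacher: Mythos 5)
Your proof is correct, and the converse (cyclic) direction is the same geometric-series computation as in the paper; but your ``only if'' direction takes a genuinely different route. The paper argues directly: a non-cyclic $p$-group of order $p^n$ has at least $p+1$ maximal subgroups (all normal, of order $p^{n-1}$), so already $\s_{3}(G)\geq\frac{1+(p+1)p^{n-1}+p^n}{p^n}>2$, with no appeal to quotients. You instead reduce to the single computation $\s_{3}(\Z_p\times\Z_p)>2$ by producing a quotient $G/(M_1\cap M_2)\cong\Z_p\times\Z_p$ (via the Burnside basis theorem and $M_1M_2=G$, which gives $|G:M_1\cap M_2|=p^2$) and then invoking the closure of $\calc_{3}$ under homomorphic images proved at the end of Section 2. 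Both arguments ultimately rest on the same structural fact that the Frattini quotient of a non-cyclic $p$-group has rank at least $2$; the paper's version is self-contained and yields an explicit lower bound on $\s_{3}(G)$ in terms of $p$ and $n$, while yours is more modular, recycling two facts already established in the paper and isolating the arithmetic in one small example. Either way the lemma stands; your reduction step is fine as stated, though it silently uses that the embedding into $G/M_1\times G/M_2$ is onto (equivalently $|G:M_1\cap M_2|=p^2$), which deserves the one-line justification indicated above.
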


\begin{proof}
Let $G$ be a finite $p$-group of order $p^n$ which is contained in
$\calc_{3}$ and suppose that it is not cyclic. Then $n\geq 2$ and
$G$ possesses at least $p+1$ normal subgroups of order $p^{n-1}$.
It results
$$\s_{3}(G)\geq \frac{1+(p+1)p^{n-1}+p^n}{p^n}>2,$$ therefore $G$
does not belong to $\calc_{3}$, a contradiction.

Conversely, for a finite cyclic $p$-group $G$ of order $p^n$, we
obviously have
$$\s_{3}(G)=\frac{1+p+...+p^n}{p^n}=\frac{p^{n+1}-1}{p^{n+1}-p^n}\leq2.$$
\end{proof}

The above lemma leads to a precise characterization of finite
nilpotent groups contained in $\calc_{3}$. It shows that the
finite cyclic groups of deficient or perfect order are in fact the
unique such groups.

\begin{theorem}\label{th:C1}
    Let $G$ be a finite nilpotent group.
    Then $G$ is contained in $\calc_{3}$ if and only if it is cyclic
    and its order is a deficient or perfect number.
\end{theorem}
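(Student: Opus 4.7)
The plan is to leverage two facts already available: the structure theorem that any finite nilpotent group is the internal direct product of its Sylow subgroups (which have pairwise coprime orders), and the multiplicativity of $\s_{3}$ on direct products of groups of coprime orders established in Section 2. Together with Lemma~\ref{th:C1}, these reduce the statement to an essentially arithmetic check.

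First I would write $G=\xmare{i=1}{k} G_{i}$, where each $G_{i}$ is a Sylow $p_{i}$-subgroup of $G$. By multiplicativity,
\[ \s_{3}(G)=\prod_{i=1}^{k}\s_{3}(G_{i}). \]
Next I would observe that $\s_{3}(G_{i})\ge 1+\frac{1}{|G_{i}|}>1$ for every $i$, since the sum defining $\s_{3}(G_{i})$ always contains the contributions of the trivial subgroup and of $G_{i}$ itself. Hence if even one factor $G_{i_{0}}$ satisfied $\s_{3}(G_{i_{0}})>2$, the whole product would exceed $2$, contradicting $G\in\calc_{3}$. By Lemma~\ref{th:C1}, $\s_{3}(G_{i_{0}})>2$ whenever $G_{i_{0}}$ is a noncyclic $p$-group, so every Sylow subgroup of $G$ must be cyclic. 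It follows that $G$ itself is cyclic.

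Once $G$ is cyclic, the normal subgroup lattice $N(G)$ coincides with the full subgroup lattice $L(G)$, which in turn is isomorphic to the divisor lattice $L_{n}$ with $n=|G|$. Therefore
\[ \s_{3}(G)=\frac{1}{|G|}\sum_{d\mid n} d=\frac{\s(n)}{n}, \]
and the condition $\s_{3}(G)\le 2$ translates exactly into $\s(n)\le 2n$, i.e.\ $n$ is deficient or perfect. The converse direction runs the same computation in reverse: a cyclic group of deficient or perfect order automatically satisfies $\s_{3}(G)\le 2$.

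There is no real obstacle here; the only thing to be careful about is the clean application of multiplicativity to conclude that a single ``bad'' Sylow factor forces $\s_{3}(G)>2$, which works precisely because the remaining factors contribute at least $1$ each. The rest is the identification of $\s_{3}$ on a cyclic group with the classical abundancy $\s(n)/n$.
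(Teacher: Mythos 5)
Your proof is correct and follows essentially the same route as the paper: decompose the nilpotent group into its Sylow subgroups, use multiplicativity of $\s_{3}$ together with the fact that each factor exceeds $1$ to force every Sylow subgroup into $\calc_{3}$, apply the lemma on $p$-groups to get cyclicity, and then identify $\s_{3}$ of a cyclic group of order $n$ with $\s(n)/n$. The only cosmetic difference is that you verify the deficient/perfect condition by direct computation on the cyclic group, while the paper invokes containment in $\calc_{1}$; these amount to the same thing.
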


\begin{proof}
Assume that $G$ belongs to $\calc_{3}$ and let $\xmare{i=1}kG_{i}$
be its decomposition as a direct product of Sylow subgroups. Since
$G_i$, $i=1,2,...,k$, are of coprime orders, one obtains
$$\s_{3}(G)=\prod_{i=1}^{k} \s_{3}(G_{i})\leq 2.$$ This inequality
implies that $\s_{3}(G_{i})\leq 2$, for all $i=\overline{1,k}$.
So, each $G_{i}$ is contained in $\calc_{3}$ and it must be
cyclic, by Lemma 1. Therefore $G$ itself is cyclic and $\mid G
\mid$ is a deficient or perfect number.

The converse is obvious, because a finite cyclic group of
deficient or perfect order is contained in $\calc_{1}$ and hence
in $\calc_{3}$.
\end{proof}

Since $\calc_{3}$ is closed under homomorphic images and the
quotient $G/G'$ is abelian for any group $G$, the next corollary
follows immediately from the above theorem.

\begin{corollary}
    Let $G$ be a finite group contained in $\calc_{3}$.
    Then $G/G'$ is cyclic and its order is a deficient or perfect number.
\end{corollary}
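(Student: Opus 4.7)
The proof should be essentially a one-line deduction chaining together two facts already established in the paper. The plan is as follows.

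First I would invoke the closure of $\calc_3$ under homomorphic images, which was proved at the end of Section 2 via the inequality $\s_3(G/N) = \sum_{H \in N(G),\, N\subseteq H} 1/|G:H| \le \s_3(G) \le 2$. Applying this with $N = G'$ gives $G/G' \in \calc_3$.

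Next I would note that $G/G'$ is abelian and therefore nilpotent, so Theorem~\ref{th:C1} (the nilpotent case) applies to it. That theorem forces $G/G'$ to be cyclic and its order to be a deficient or perfect number, which is exactly the conclusion sought.

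There is no real obstacle here: the corollary is stated as following immediately from the preceding theorem together with the homomorphic-image closure property, and the only thing to do is assemble these two ingredients in the right order. The proof will be a couple of lines.
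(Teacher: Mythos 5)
Your proposal is correct and is exactly the argument the paper intends: closure of $\calc_{3}$ under homomorphic images gives $G/G'\in\calc_{3}$, and since $G/G'$ is abelian (hence nilpotent), Theorem~2 yields that it is cyclic of deficient or perfect order. Nothing is missing.
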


Theorem 2 also shows that in order to produce examples of
noncyclic groups contained in $\calc_{3}$, we must look at some
classes of finite groups which are larger than the class of finite
nilpotent groups. One of them is constituted by the finite
supersolvable groups and an example of such a group that belongs
to $\calc_{3}$ has been already given: the nonabelian $P$-group of
order $p^2q$, where $p,q$ are two primes satisfying $p\neq 2$,
$q\mid p-1$ and $p^2q \geq 1+p+2p^2$. In fact, $\calc_{3}$
includes only a small class of groups of this type, as shows the
following proposition.

\begin{proposition}
    Let $p,q$ be two primes such that $p\neq 2$ and $q\mid p-1$.
    Then the finite nonabelian $P$-group $G$ of order $p^{n-1}q$ is contained in $\calc_{3}$ if and only if either $n=2$ or $n=3$ and $p^2q \geq 1+p+2p^2$.
\end{proposition}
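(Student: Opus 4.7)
The plan is to first pin down the normal subgroup lattice of $G$ explicitly and then compare $\sigma_3(G)$ to $2$ case by case on $n$. Write $G = H \rtimes \langle t \rangle$, where $H \cong \Z_p^{\,n-1}$ is the elementary abelian Sylow $p$-subgroup, $t$ has order $q$, and $t$ acts on $H$ by the power automorphism $x \mapsto x^{r}$ with $r^{q} \equiv 1$ but $r \not\equiv 1$ modulo $p$. My first task is to show that $N(G)$ consists of all subgroups of $H$ together with $G$ itself. The inclusion $\supseteq$ is immediate since a power automorphism preserves every subgroup of $H$. For the converse, if $N \trianglelefteq G$ contains an element $xt^{j}$ with $1 \le j \le q-1$, conjugating by an arbitrary $h \in H$ and comparing with $xt^{j}$ yields $h^{r^{-j}-1} \in N$; because $r^{j} \not\equiv 1 \pmod{p}$, these elements fill all of $H$, so $H \subseteq N$ and hence $N = G$.

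Given this description, the bookkeeping is clean: $H$ has $\binom{n-1}{k}_{p}$ subgroups of order $p^{k}$, so
\[
\sigma_3(G) \;=\; 1 + \frac{1}{p^{n-1} q}\sum_{k=0}^{n-1}\binom{n-1}{k}_{p} p^{k}.
\]
For $n=2$ the inner sum equals $1+p$, giving $\sigma_3(G) = 1 + \frac{1}{q} + \frac{1}{pq} \le \frac{5}{3} < 2$ (using $q \ge 2$ and $p \ge 3$), so $G$ lies in $\calc_{3}$ unconditionally. For $n=3$ the inner sum equals $1 + (p+1)p + p^{2} = 1+p+2p^{2}$, and clearing denominators shows $\sigma_3(G) \le 2$ is equivalent to $p^{2} q \ge 1 + p + 2p^{2}$, which is the stated condition.

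The direction I expect to be the main obstacle is the converse for $n \ge 4$: I must show $\sigma_3(G) > 2$, i.e.\ $\sum_{k=0}^{n-1}\binom{n-1}{k}_{p} p^{k} > p^{n-1} q$. The key reduction is $q \le p-1$ (from $q \mid p-1$), so it suffices to beat $p^{n-1}(p-1) = p^{n} - p^{n-1}$. I would bound the left side from below using only the term with $k = n-2$, namely
\[
\binom{n-1}{1}_{p}\, p^{\,n-2} \;=\; p^{\,n-2} + p^{\,n-1} + p^{\,n} + \cdots + p^{\,2n-4},
\]
together with the $p^{\,n-1}$ coming from $k = n-1$. For $n \ge 4$ the exponent $n$ lies in the range $n-2,\ldots,2n-4$, so this partial sum already strictly exceeds $p^{\,n} - p^{\,n-1}$, which settles the case. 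Thus the whole proposition rests on the preliminary structural step, where the power-automorphism hypothesis is precisely what forces the rigid form of $N(G)$ and makes the subsequent arithmetic go through.
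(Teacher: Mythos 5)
Your proof is correct, and its overall strategy coincides with the paper's: identify $N(G)$ as $\{G\}$ together with all subgroups of the elementary abelian normal subgroup, then compare $\sum_{H\in N(G)}|H|$ with $2|G|$ case by case on $n$. The differences are in how the two ingredients are obtained. The paper simply cites Lemma 2.2.2 of Schmidt for the structure of $N(G)$, whereas you prove it directly from the semidirect product presentation via the commutator computation $[h,xt^{j}]=h^{r^{j}-1}$ (your exponent $r^{-j}-1$ versus $r^{j}-1$ is only a conjugation-convention discrepancy and is harmless, since either exponent is invertible mod $p$); this makes your argument self-contained. For the arithmetic, the paper encodes the subgroup counts $a_{n-1,p}(k)$ through recurrence relations and bounds $x_{n-1,p}$ for $n\geq 4$ via $x_{n-1,p}=x_{n-2,p}+p^{n-1}a_{n-2,p}$ and $a_{n-2,p}\geq p+3$, while you work with the Gaussian binomial coefficients and get the needed inequality $\sum_k\binom{n-1}{k}_p p^k>p^{n-1}q$ for $n\geq 4$ by isolating just the $k=n-2$ and $k=n-1$ terms and using $q\leq p-1$; your bound is more direct and avoids the recurrences entirely. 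Your $n=2$ and $n=3$ computations agree with the paper's ($x_{1,p}=1+p$, $x_{2,p}=1+p+2p^2$), so the whole argument goes through.
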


\begin{proof}
By Lemma 2.2.2 of \cite{6}, the derived subgroup $G'$ of $G$ is
elementary abelian of order $p^{n-1}$ and $N(G)$ consists of $G$
itself and of the subgroups of $G'$. For every $k=0,1,...,n-1$,
let us denote by $a_{n-1,p}(k)$ the number of all subgroups of
order $p^k$ of $G'$. Then we have
$$\s_{3}(G)=\frac{x_{n-1,p}+p^{n-1}q}{p^{n-1}q}\hspace{1mm},$$ where $x_{n-1,p}=\dd\sum_{k=0}^{n-1}p^ka_{n-1,p}(k)$.
Mention that the numbers $a_{n-1,p}(k)$ satisfy the following
recurrence relation
$$a_{n-1,p}(k)=a_{n-2,p}(k)+p^{n-1-k}a_{n-2,p}(k-1), \hspace{1mm}{\rm
for}\hspace{1mm} {\rm all}\hspace{1mm} k=\overline{1,n-2},$$ and
have been explicitly determined in \cite{9}. Denote by $a_{n-1,p}$
the total number of subgroups of $G'$, that is
$a_{n-1,p}=\dd\sum_{k=0}^{n-1}a_{n-1,p}(k)$. One obtains that
$x_{n-1,p}$ satisfies also a certain recurrence relation, namely
$$x_{n-1,p}=x_{n-2,p}+p^{n-1}a_{n-2,p}, \hspace{1mm}{\rm
for}\hspace{1mm} {\rm all}\hspace{1mm} n\geq 2\hspace{1mm}.$$ Then
$$x_{n-1,p}=1+\dd\sum_{k=1}^{n-1}p^ka_{k-1,p}\hspace{1mm}.$$ For
$n\geq 4$ we get $a_{n-2,p}\geq a_{2,p}=p+3$, therefore
$$x_{n-1,p}>p^{n-1}(p+3)>p^n>p^{n-1}q.$$ In other words, we have
$\s_{3}(G)>2$, i.e. $G$ is not contained in $\calc_{3}$. For $n=3$
it results $x_{2,p}=1+p+2p^2$, which implies that $G$ belongs to
$\calc_{3}$ if and only if $p^2q \geq 1+p+2p^2$. Obviously, for
$n=2$ we have $x_{1,p}=1+p\leq pq$ and hence $G$ is contained in
$\calc_{3}$.
\end{proof}

Remark that the quotient $G/G'$ is cyclic of deficient or perfect
order for all finite nonabelian $P$-groups $G$, but they are not
always contained in the class $\calc_{3}$, as shows Proposition 4.
In this way, the necessary condition on $G$ in Corollary 3 is not
sufficient to assure its containment to $\calc_{3}$.
\bigskip

A remarkable class of finite supersolvable groups is constituted
by the metacyclic groups. From Lemma 2.1 in \cite{3}, such a group
$G$ has a presentation of the form
\begin{equation}
    <x,y |\hspace{1mm} x^k=y^l, y^m=1, y^x=y^n>, \tag{4}
\end{equation}
where $k$, $l$, $m$ and  $n$ are positive integers such that $m |
\hspace{1mm}(n^k-1)$ and $m | \hspace{1mm}l(n-1)$. Moreover,
$N=\hspace{1mm}<y>$ is a normal subgroup of $G$,
$G/N=\hspace{1mm}<xN>$ is of order $k$ and
$G'=\hspace{1mm}<y^{n-1}>$. In several cases we are able to decide
when $G$ is contained in $\calc_{3}$.

Suppose first that $G$ belongs to $\calc_{3}$. Then $G/G'$ is
cyclic and its order is a deficient or perfect number, by
Corollary 3. We infer that $G'=N$. This implies that
gcd$(m,n-1)=1$ and so $m |\hspace{1mm}l$. It follows that $G$ has
a presentation of the form
\begin{equation}
    <x,y |\hspace{1mm} x^k=y^m=1, y^x=y^n>, \tag{5}
\end{equation}
where gcd$(m,n-1)=1$, $m |\hspace{1mm} \dd\frac{n^k-1}{n-1}$ and
$k$ is a deficient or perfect number. We also remark that $G$ is a
split metacyclic group with trivial center. Giving a precise
description of the normal subgroup lattice of such a group is very
difficult, but clearly $G$ itself and all subgroups of $G'$ are
contained in $N(G)$. In this way
\begin{equation}
    \s_{3}(G)=\sum_{H\in N(G)}|H|\geq km+\s(m),\nonumber
\end{equation}
which implies that
\begin{equation}
    \s(m)\leq km. \nonumber
\end{equation}
So, we have proved the following proposition.

\begin{proposition}
    Let $G$ be the finite metacyclic group given by {\rm(4)}. If $G$
    belongs to $\calc_{3}$, then it is a split metacyclic group of the form
    {\rm(5)} and $\s(m)\leq km$.
\end{proposition}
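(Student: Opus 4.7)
My plan is to flesh out the reasoning sketched just above the statement. First I would suppose $G\in\calc_3$ and invoke Corollary~3 to conclude that $G/G'$ is cyclic with $|G/G'|$ a deficient or perfect number. From the relation $y^x=y^n$ in (4) a direct commutator computation gives $[x,y]=y^{n-1}$, so
\[ G'=\langle y^{n-1}\rangle\subseteq\langle y\rangle=N,\qquad [N:G']=\gcd(m,n-1). \]

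The first key step is to deduce $G'=N$, equivalently $\gcd(m,n-1)=1$. Granted this, the hypothesis $m\mid l(n-1)$ from (4) immediately forces $m\mid l$, so $y^l=1$ and $x^k=y^l=1$; moreover, writing $n^k-1=(n-1)\cdot\frac{n^k-1}{n-1}$ and using $\gcd(m,n-1)=1$ turns $m\mid n^k-1$ into $m\mid\frac{n^k-1}{n-1}$. This delivers a presentation of the form (5), whose $k$ is $|G/G'|$ and is therefore deficient or perfect by Corollary~3; the centre of $G$ is trivial since $y^{x^i}=y^{n^i}$ with $\gcd(m,n-1)=1$ prevents any nontrivial power of $y$ from being centralised by $x$.

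For the inequality, I would exploit that $G'=N$ is cyclic of order $m$, so every subgroup of $N$ is characteristic in $N$ and hence normal in $G$. Consequently $N(G)\supseteq\{G\}\cup L(N)$, and
\[ \sum_{H\in N(G)}|H|\;\ge\;|G|+\sum_{d\mid m}d\;=\;km+\sigma(m). \]
Combining this with the defining inequality $\sum_{H\in N(G)}|H|\le 2|G|=2km$ coming from $G\in\calc_3$ yields $\sigma(m)\le km$, as required. I expect the main obstacle to be justifying $G'=N$: the integers $k,l,m,n$ in the presentation (4) are not canonical, so one must either reindex the presentation using the cyclicity of $G/G'$ supplied by Corollary~3 (choosing $y$ to generate $G'$ and $x$ to lift a generator of $G/G'$), or argue directly that any proper containment $G'\subsetneq N$ would contradict $\sigma_3(G)\le 2$ in combination with the divisibility constraints $m\mid n^k-1$ and $m\mid l(n-1)$.
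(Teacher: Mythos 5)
Your plan reproduces the paper's proof essentially step for step: invoke Corollary 3 to get that $G/G'$ is cyclic of deficient or perfect order, conclude $G'=N$ (hence $\gcd(m,n-1)=1$, $m\mid l$ and $m\mid\frac{n^k-1}{n-1}$, giving the split presentation (5)), observe that $N(G)$ contains $G$ together with every subgroup of the cyclic group $G'$, and compare $km+\sigma(m)$ with $2|G|=2km$. The details you supply (the computation $[x,y]=y^{n-1}$ with $[N:G']=\gcd(m,n-1)$, the characteristic-in-normal argument, the final count) are correct and are exactly what the paper leaves implicit. The step you defer, justifying $G'=N$, is precisely the step the paper itself dispatches in one sentence (``We infer that $G'=N$'') with no further argument; your suspicion that the parameters in (4) are not canonical is well founded, since for a fixed presentation (4) the cyclicity of $G/G'$ alone does not force $\gcd(m,n-1)=1$ (take $\Z_4$ presented with $k=2$, $l=1$, $m=2$, $n=1$, where $G'=1\neq N$), so a complete argument really does need something like your proposed re-choice of presentation, taking $y$ to generate the cyclic group $G'$ and $x$ to lift a generator of the cyclic group $G/G'$, after which the conclusion refers to the new parameters. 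In other words, you have located a genuine thin spot, but it is a thin spot of the paper's own proof rather than a divergence from it, and the rest of your outline matches the paper. One minor caution: the ``trivial centre'' remark is neither needed for the proposition nor true for all groups of the form (5) (e.g. $m=1$), so it is best dropped or qualified, just as the paper's aside should be.
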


The necessary conditions established in the above proposition
become sufficient under the supplementary assumption that $G/G'$
is of prime order, that is $k$ is a prime. In this case the normal
subgroup lattice of the group $G$ of type (5) is given by the
equality
\begin{equation}
    N(G)=\{G\}\cup L(G'). \nonumber
\end{equation}
Indeed, if $H$ is a normal subgroup of $G$ which is not contained
in $G'$, then $G'\subset HG'$ and therefore $HG'=G$. Since $G'$ is
cyclic, one obtains that $G/H\cong G'/H\cap G'$ is also cyclic.
Thus $G'\subseteq H$ and hence $H=G$.

We have $$\s_{3}(G)=km+\s(m)\leq 2|G|\Leftrightarrow \s(m)\leq
km$$ and so the following theorem holds.

\begin{theorem}
    Let $G$ be the finite metacyclic group given by {\rm(4)} and assume that $k$ is a prime. Then $G$
    belongs to $\calc_{3}$ if and only if it is a split metacyclic group of the form
    {\rm(5)} and $\s(m)\leq km$.
\end{theorem}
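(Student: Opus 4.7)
The plan is to split the equivalence into two implications, the forward one being a direct application of Proposition~5 and the backward one relying on an explicit description of $N(G)$.

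For the ``only if'' direction, observe that a prime $k$ trivially satisfies $\sigma(k)=1+k<2k$, so $k$ is deficient. Consequently every hypothesis on $k$ in Proposition~5 is automatic, and applying that proposition to $G$ immediately yields that $G$ has a presentation of the form (5) and that $\sigma(m)\leq km$. No extra work is required here.

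For the ``if'' direction, the main task is to identify the normal subgroup lattice as $N(G)=\{G\}\cup L(G')$. First, since the presentation (5) satisfies $\gcd(m,n-1)=1$, the element $y^{n-1}$ generates $\langle y\rangle$, so $G'=\langle y\rangle$ is cyclic of order $m$. Then let $H\trianglelefteq G$ with $H\not\subseteq G'$. Because $|G/G'|=k$ is prime, the nontrivial subgroup $HG'/G'$ of $G/G'$ must be the whole group, so $HG'=G$. The second isomorphism theorem gives $G/H\cong G'/(H\cap G')$, which is cyclic and in particular abelian. Since $G/G'$ is the largest abelian quotient of $G$, we must have $G'\subseteq H$, and combined with $HG'=G$ this forces $H=G$. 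Thus the only normal subgroups are $G$ itself together with the subgroups of the cyclic group $G'$.

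Once this structural fact is in hand, the computation is immediate: the subgroups of the cyclic group $G'$ of order $m$ have orders ranging over the divisors of $m$, one of each size, so
$$\sigma_3(G)=\frac{1}{km}\Bigl(km+\sum_{d\mid m}d\Bigr)=1+\frac{\sigma(m)}{km},$$
and $\sigma_3(G)\leq 2$ is equivalent to $\sigma(m)\leq km$. This closes the converse.

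The only nontrivial step is the description $N(G)=\{G\}\cup L(G')$; the argument above depends essentially on the primality of $k$ (used to conclude $HG'=G$ from $HG'\neq G'$) and on the cyclicity of $G'$ (used to force $G/H$ abelian). All remaining arithmetic is routine.
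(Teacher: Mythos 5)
Your proposal is correct and follows essentially the same route as the paper: the forward implication is exactly Proposition~5, and the converse rests on showing $N(G)=\{G\}\cup L(G')$ when $k$ is prime (via $HG'=G$ and $G/H\cong G'/(H\cap G')$ cyclic, forcing $G'\subseteq H$), followed by the computation $\s_3(G)\le 2\Leftrightarrow \s(m)\le km$. The only small slip is the remark that primality of $k$ makes ``every hypothesis on $k$ in Proposition~5 automatic'' --- the deficiency of $k$ is part of the conclusion of that proposition (via Corollary~3), not a hypothesis --- but this does not affect the argument.
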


Remark that in the particular case when $k=2$ and $n=m-1$ the
group with the presentation (5) is in fact the dihedral group
$D_{2m}$. In this way, by Theorem 6 we obtain that $D_{2m}$ is
contained in $\calc_{3}$ if and only if gcd$(m,m-2)=1$ (that is,
$m$ is odd) and $\s(m)\leq 2m$ (that is, $m$ is deficient or
perfect).

\begin{corollary}
    The finite dihedral group $D_{2m}$ is contained in $\calc_{3}$ if and only if $m$ is an odd deficient or perfect number.
\end{corollary}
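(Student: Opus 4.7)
The plan is to deduce the corollary as a direct specialization of Theorem~6 to the case $k=2$ and $n=m-1$, together with two elementary number-theoretic translations.

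First I would verify that the dihedral group $D_{2m}$ really does fit presentation~(5) with these parameters. Taking $k=2$ and $n=m-1$, relation $y^x = y^n$ becomes $y^x = y^{-1}$, and the relations $x^2 = y^m = 1$ give exactly the standard presentation of $D_{2m}$. I would also quickly check the two side conditions that accompany~(5): the divisibility $m \mid (n^k-1)/(n-1)$ reduces to $m \mid ((m-1)^2-1)/(m-2) = m$, which is trivial; and the condition $\gcd(m,n-1)=1$ becomes $\gcd(m,m-2)=\gcd(m,2)=1$, which is precisely the statement that $m$ is odd. Since $k=2$ is prime, Theorem~6 applies.

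Next I would read off the two equivalent conditions from Theorem~6. The ``split metacyclic of form~(5)'' half is $\gcd(m,m-2)=1$, translated above to ``$m$ odd.'' The arithmetic half is $\s(m)\le km = 2m$, which by the definitions recalled in the introduction is exactly the statement that $m$ is a deficient or a perfect number. Combining the two yields: $D_{2m}\in\calc_3$ if and only if $m$ is odd and $m$ is deficient or perfect, i.e.\ $m$ is an odd deficient or perfect number.

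There is essentially no obstacle here; the whole content is in Theorem~6. The only place where one must be a little careful is the direction where $m$ is even: in that case $\gcd(m,m-2)=2\ne 1$, so the group is not of the split form~(5), and Proposition~5 already tells us such a $D_{2m}$ cannot lie in $\calc_3$, so the equivalence goes through cleanly in both directions.
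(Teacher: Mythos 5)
Your proposal is correct and follows exactly the paper's route: the corollary is obtained by specializing Theorem~6 to $k=2$, $n=m-1$, noting that the presentation then gives $D_{2m}$, and translating $\gcd(m,m-2)=1$ into ``$m$ odd'' and $\s(m)\le 2m$ into ``$m$ deficient or perfect.'' Your extra checks of the side conditions and the even-$m$ case via Proposition~5 just make explicit what the paper leaves implicit.
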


The dihedral groups $D_{2m}$ with $n$ odd satisfy the property
that they have no two normal subgroups of the same order, that is
the order map from $N(D_{2m})$ to $L_{2m}$ is one-to-one. In fact,
we can easily see that if a finite group satisfy this property and
its order is a deficient or perfect number, then it belongs to
$\calc_{3}$. By Corollary 7 it also follows that the dihedral
groups of type $D_{2p^n}$, with $p$ an odd prime, are all
contained in $\calc_{3}$. These groups satisfy the stronger
property that their normal subgroup lattices are chains, a
condition which is sufficient to assure the containment of an
arbitrary finite group to $\calc_{3}$. In particular, we remark
that $\calc_{3}$ also contains some finite groups with few normal
subgroups, as finite simple groups or finite symmetric groups.
\bigskip

Another class of finite groups which can naturally be connected to
$\calc_{3}$ is constituted by finite T-groups. Recall that a group
$G$ is called a {\it T-group} if the normality is a transitive
relation on $G$, that is if $H$ is a normal subgroup of $G$ and
$K$ is a normal subgroup of $H$, then $K$ is normal in $G$ (in
other words, every subnormal subgroup of $G$ is normal in $G$).
The structure of arbitrary finite T-groups is unknown, without
some supplementary assumptions. One of them is constituted by the
solvability. The finite solvable T-groups have been described in
Gasch\"{u}tz \cite{2}: such a group $G$ possesses an abelian
normal Hall subgroup $N$ of odd order such that $G/N$ is a
Dedekind group (note that $G/N$ is the unique maximal nilpotent
quotient of $G$). Moreover, it is well-known that a finite
solvable T-group is metabelian and all its subgroups are also
T-groups.

Suppose first that the finite solvable T-group $G$ belongs to
$\calc_{3}$ and let $H$ be a complement of $N$ in $G$ (that is,
$NH=G$ and $N\cap H=1$). Set $|G|=2^kr$, where $r$ is an odd
number. Then $|N|=r$ and $|H|=2^k$. Since $\calc_{3}$ is closed
under homomorphic images, it follows that $H\cong G/N$ is also
contained in $\calc_{3}$. We infer that $H$ is cyclic, in view of
Theorem 2.

By Remark 4.1.5 (page 160) of \cite{6}, we are able to describe
the subgroup lattice of $G$
\begin{equation}
L(G)=\{N_1H_1^x \mid x \in N, N_1\leq N, H_1\leq H \}, \nonumber
\end{equation}
where $N_1H_1^x\leq N_2H_2^y$ if and only if $N_1\leq N_2$,
$H_1\leq H_2$ and $xy^{-1} \in C_N(H_1)N_2$. The normal subgroup
lattice of $G$ can be easily determined by using the above
equality
$$N(G)=\{N_1H_1^x \in L(G) \mid C_N(H_1)N_1=N\}.$$ Remark that all
subgroups of $G$ which are contained in $N$ or contain $N$ are
normal in $G$. Then
\begin{equation}
\s_{3}(G)=\sum_{K\in N(G)}|K| \geq
\s_3(N)+|N|\hspace{1mm}(\s_3(G/N)-1)=\s_3(N)+r(2^{k+1}-2).\nonumber
\end{equation}
On the other hand, we have
\begin{equation}
\s_3(G)\leq 2|G|=2^{k+1}r \nonumber
\end{equation}
because $G$ belongs to $\calc_{3}$. By the previous two
inequalities, one obtains that $\s_3(N)\leq 2|N|$, that is $N$ is
also contained in $\calc_{3}$. Therefore $N$ is cyclic and its
order $r$ is a deficient or perfect number. Hence we have proved
the following result.

\begin{proposition}
    Let $G$ be a finite solvable T-group. If $G$
    belongs to $\calc_{3}$, then it possesses a cyclic normal Hall
    subgroup $N$ of odd deficient or perfect order and every
    complement of $N$ in $G$ is also cyclic. In particular, $G$
    is a split metacyclic group.
\end{proposition}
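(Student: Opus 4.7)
The plan is to follow the blueprint sketched in the paragraphs immediately preceding the statement, promoting those estimates to a clean three-step deduction.

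First, I would invoke Gasch\"utz's structure theorem \cite{2} for finite solvable T-groups to decompose $G = N \rtimes H$, where $N$ is the abelian normal Hall subgroup of odd order $r$ and $H$ is a complement of order $2^k$, with $|G| = 2^k r$ and $G/N \cong H$ Dedekind. The existence of a complement follows from Schur--Zassenhaus, since $N$ is a normal Hall subgroup.

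Next, I would show that $H$ is cyclic. Closure of $\calc_{3}$ under homomorphic images (established in Section~2) gives $H \cong G/N \in \calc_{3}$; since $H$ is nilpotent, Theorem~2 then forces it to be cyclic. As any two complements of $N$ in $G$ are $G$-conjugate, \emph{every} complement is cyclic as well.

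The heart of the argument is showing $N \in \calc_{3}$. Using the description of $L(G)$ and of $N(G)$ cited from \cite{6}, every subgroup of $G$ either contained in $N$ (the case $H_1 = 1$, where $C_N(1)N_1 = N$ automatically) or containing $N$ (the case $N_1 = N$) is normal in $G$. Counting these two families and subtracting the double count of $N$ itself yields
\begin{equation*}
    |G|\,\s_{3}(G) \;=\; \sum_{K \in N(G)} |K| \;\geq\; |N|\,\s_{3}(N) + |G|\,\s_{3}(G/N) - |N|.
\end{equation*}
Dividing by $|G|$ and substituting $\s_{3}(G/N) = (2^{k+1}-1)/2^{k}$ (since $G/N$ is cyclic of order $2^{k}$), then using the hypothesis $\s_{3}(G) \leq 2$, gives $\s_{3}(N) \leq 2$. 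Hence the abelian group $N$ belongs to $\calc_{3}$, and a second application of Theorem~2 makes $N$ cyclic of deficient or perfect order.

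The conclusion that $G$ is split metacyclic is then immediate, since $G = N \rtimes H$ with both $N$ and $H$ cyclic. The main obstacle will be setting up the lower bound on $\s_{3}(G)$ cleanly; once the normality of subgroups lying inside or above $N$ is read off from the lattice description of \cite{6}, the rest of the deduction is bookkeeping plus two appeals to the nilpotent case treated in Theorem~2.
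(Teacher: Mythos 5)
Your proposal is correct and follows essentially the same route as the paper: Gasch\"utz's decomposition $G = N \rtimes H$, closure of $\calc_{3}$ under quotients plus Theorem~2 to make $H$ cyclic, the observation (via Schmidt's lattice description) that all subgroups inside or above $N$ are normal, and the resulting inequality forcing $\s_{3}(N)\le 2$ so that Theorem~2 applies again to $N$. Your normalized inequality $|G|\,\s_{3}(G)\ge |N|\,\s_{3}(N)+|G|\,\s_{3}(G/N)-|N|$ is exactly the paper's estimate $\s_3(N)+r(2^{k+1}-2)$ in disguise, and the extra remarks (Schur--Zassenhaus for existence of $H$, conjugacy of complements) only make explicit what the paper leaves implicit.
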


\begin{corollary}
    All finite solvable T-groups contained in $\calc_{3}$ are extensions
    of a cyclic group of odd order by a cyclic {\rm 2}-group, both contained in $\calc_{3}$.
\end{corollary}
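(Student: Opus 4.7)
The plan is to assemble the corollary directly from Proposition 8 together with facts already established in Section 2 and Theorem 2. Let $G$ be a finite solvable T-group lying in $\calc_{3}$ and write $|G|=2^k r$ with $r$ odd.

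First I would invoke Proposition 8 to extract the structural data: $G$ contains a cyclic normal Hall subgroup $N$ of odd order $r$ with $r$ deficient or perfect, and every complement of $N$ in $G$ is cyclic. Since a complement $H$ has order $|G|/|N|=2^k$, it is a cyclic $2$-group. Via the canonical isomorphism $G/N\cong H$, the quotient $G/N$ is therefore a cyclic $2$-group, so $G$ is indeed an extension of a cyclic group of odd order by a cyclic $2$-group.

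Next I would verify that both the kernel and the quotient lie in $\calc_{3}$. For $N$: it is a finite cyclic group whose order $r$ is deficient or perfect, so by the cyclic case of Theorem 2, $N\in\calc_{3}$. For $G/N$: the cleanest route is to invoke the fact (proved at the end of Section 2) that $\calc_{3}$ is closed under homomorphic images, which immediately gives $G/N\in\calc_{3}$. As a cross-check, since $|G/N|=2^k$ satisfies $\s(2^k)=2^{k+1}-1<2\cdot 2^k$, the order of $G/N$ is deficient, and another application of Theorem 2 reconfirms $G/N\in\calc_{3}$.

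There is essentially no obstacle: the corollary is a repackaging of Proposition 8, with Theorem 2 and the closure of $\calc_{3}$ under homomorphic images supplying the two membership statements. The only point worth being careful about is ensuring that $G/N$ is genuinely isomorphic to a complement (so that its cyclicity follows from Proposition 8), which is immediate from $NH=G$ and $N\cap H=1$.
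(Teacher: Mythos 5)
Your proposal is correct and follows essentially the same route as the paper: the corollary is a direct repackaging of Proposition 8 (cyclic odd-order normal Hall subgroup $N$ of deficient or perfect order, cyclic complement $H\cong G/N$ of order $2^k$), with membership of $N$ and $G/N$ in $\calc_{3}$ supplied by Theorem 2 and the closure of $\calc_{3}$ under homomorphic images. No gaps.
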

\bigskip

As show all our previous results, for several particular classes
of finite groups $G$ we are able to give necessary and sufficient
conditions such that $G$ belongs to $\calc_{3}$. Finally, we note
that the problem of finding characterizations of {\it arbitrary}
finite groups contained in $\calc_{3}$ remains still open.
\bigskip

\vspace*{5ex}

\small

\hfill
\begin{minipage}[t]{5cm}
Marius T\u arn\u auceanu \\
Faculty of  Mathematics \\
``Al.I. Cuza'' University \\
Ia\c si, Romania \\
e-mail: {\tt tarnauc@uaic.ro}
\end{minipage}

\end{document}